\newtheorem{theorem}{Theorem}
\newtheorem{lemma}[theorem]{Lemma}
\newtheorem{proposition}[theorem]{Proposition}
\theoremstyle{Example}
\newtheorem{question}{Question}
\newcommand*{\R}{\mathbb{R}}
\newcommand*{\Z}{\mathbb{Z}}
\newcommand*{\Zp}{\mathbb{Z}_{\geq 0}}
\newcommand*{\Zb}{\bar{\mathbb{Z}}}
\newcommand*{\pset}{\mathcal{P}}
\newcommand*{\aff}{\mathrm{aff.hull}}
\newcommand*{\conv}{\mathrm{conv.hull}}
\newcommand*{\crank}{\mathrm{cr}}
\newcommand*{\Car}{Carath\'eodory}
\newcommand*{\transp}{\mathsf{T}}
\title{Polyhedra with the Integer \Car \! \! Property}
\author{Dion Gijswijt\footnote{CWI and Dep. of Mathematics, Leiden University. Email: dion.gijswijt@gmail.com.}\ and Guus Regts\footnote{CWI, Amsterdam. Email: regts@cwi.nl.}}
\begin{document}
\maketitle
\begin{abstract}
A polyhedron $P$ has the \emph{Integer Carath\'eodory Property} if the following holds. 
For any positive integer $k$ and any integer vector $w\in kP$, there exist affinely independent integer vectors $x_1,\ldots,x_t\in P$ and positive integers $n_1,\ldots,n_t$ such that $n_1+\cdots+n_t=k$ and $w=n_1x_1+\cdots+n_tx_t$. 
In this paper we prove that if $P$ is a (poly)matroid base polytope or if $P$ is defined by a TU matrix, then $P$ and projections of $P$ satisfy the integer \Car \! \! property.
\\[.5cm]
\textbf{Keywords:} Carath\'eodory, matroid, base polytope, TU matrix, integer decomposition.
\\
\textbf{MSC:} 90C10 (52B40).
\end{abstract}

\section{Introduction}
A polyhedron $P\subseteq \R^n$ has the \emph{integer decomposition property} if for every positive integer $k$, every integer vector in $kP$ is the sum of $k$ integer vectors in $P$. Equivalently, every $\tfrac{1}{k}$-integer vector $x\in P$ is a convex combination
\begin{equation}\label{fractional decomposition}
x=\lambda_1x_1+\cdots+\lambda_tx_t,\quad x_i\in P\cap \Z^n, \lambda_i\in \tfrac{1}{k}\Z.
\end{equation}
Examples of such polyhedra include: stable set polytopes of perfect graphs, polyhedra defined by totally unimodular matrices and matroid base polytopes. 

It is worth remarking the relation with Hilbert bases. Recall that a finite set of integer vectors $H$ is called a \emph{Hilbert base} if every integer vector in the convex cone generated by $H$, is an integer sum of elements from $H$. Hence if $P$ is an integer polytope and $H:=\{\left(\begin{smallmatrix}1\\x\end{smallmatrix}\right)\mid x\in P\text { integer}\}$, then $P$ has the integer decomposition property, if and only if $H$ is a Hilbert base.

Let $P$ be a polytope with the integer decomposition property. It is natural to ask for the smallest number $T$, such that we can take $t\leq T$ in (\ref{fractional decomposition}) for every $k$ and every $\tfrac{1}{k}$-integer vector $x\in P$. We denote this number by $\crank(P)$, the \emph{Carath\'eodory rank} of $P$.  
Clearly, $\crank(P)\geq \dim (P)+1$ holds, since $P$ is not contained in the union of the finitely many affine spaces spanned by at most $\dim (P)$ integer points in $P$. 


Cook et al. \cite{CookFonluptSchrijver} showed that when $H$ is a Hilbert base generating a pointed cone $C$ of dimension $n$, every integer vector in $C$ is the integer linear combination of at most $2n-1$ different elements from $H$. For $n>0$, this bound was improved to $2n-2$ by Seb\H o \cite{Sebo}. By the above remark, this implies that $\crank(P)\leq 2\dim (P)$ holds for any polytope $P$ of positive dimension.

Bruns et al. \cite{Brunsetal} give an example of a Hilbert base $H$ generating a pointed cone $C$ of dimension $6$, together with an integer vector in $C$ that cannot be written as a nonnegative integer combination of less than $7$ elements from $H$. Their example yields a $0-1$ polytope with the integer decomposition property of dimension $5$ but with Carath\'eodory rank $7$, showing that $\crank(P)=\dim(P)+1$ does not always hold. The vertices of the polytope are given by the columns of the matrix
\begin{equation}
\left[\begin{array}{rrrrrrrrrr}
1&1&1&1&1&0&0&0&0&0\\
1&1&0&0&0&0&0&1&0&1\\
0&1&1&0&0&1&0&0&1&0\\
0&0&1&1&0&0&1&0&0&1\\
0&0&0&1&1&1&0&1&0&0
\end{array}\right].
\end{equation}

In this paper we prove that if $P$ is a (poly)matroid base polytope or if $P$ is a polyhedron defined by a TU matrix then $P$ and projections of $P$ satisfy the inequality $\crank(P)\leq \dim(P)+1$. For matroid base polytopes this answers a question of Cunningham \cite{Cunningham} asking whether a sum of bases in a matroid can always be written as a sum using at most $n$ bases, where $n$ is the cardinality of the ground set (see also \cite{Sebo,EGRES}). 

In our proof we use the following strengthening of the integer decomposition property, inspired by Carath\'eodory's theorem from convex geometry. We say that a polyhedron $P\subset \R^n$ has the \emph{Integer Carath\'eodory Property} (notation: ICP) if for every positive integer $k$ and every integer vector $w\in kP$ there exist affinely independent $x_1,\ldots, x_t\in P\cap \Z^n$ and $n_1,\ldots,n_t\in \Zp$ such that $n_1+\cdots+n_t=k$ and $w=\sum_i n_ix_i$. Equivalently, the vectors $x_i$ in (\ref{fractional decomposition}) can be taken to be affinely independent. In particular, if $P$ has the ICP, then $\crank(P)\leq \dim P+1$.

It is implicit in \cite{CookFonluptSchrijver, Sebo} that the stable set polytope of a perfect graph has the ICP since a `Greedy' decomposition can be found, where the $x_i$ are in the interior of faces of decreasing dimension, and hence are affinely independent. 

The organization of the paper is as follows. In Section 2 we introduce an abstract class of polyhedra and show that they have the ICP. 

In Section 3 we apply this result to show that polyhedra defined by (nearly) totally unimodular matrices, and their projections, have the ICP.

Section 4 deals with applications to (poly)matroid base polytopes and the intersections of two gammoid base polytopes, showing that these all have the ICP. We conclude by stating two open problems related to matroid intersection.

\section{A class of polyhedra having the ICP}
In this section we give a sufficient condition for a polyhedron $P\subset \R^n$ to have the Integer \Car \! \! Property.
This condition is closely related to the \emph{middle integral decomposition condition} introduced by McDiarmid in \cite{mcdiarmid}.
First we introduce some notation and definitions.

Throughout this paper we set $\Zb:=\Z\cup\{-\infty,+\infty\}.$
For a vector $x\in \R^n$ we denote the $i$-th entry of $x$ by $x(i)$.
Recall that a polyhedron $P$ is called an \emph{integer polyhedron} if every face of $P$ contains an integer point.
The polyhedron $P$ is called \emph{box-integer} if for every pair of vectors $c\leq d\in \Zb^n$, the set $\{x\in P|c\leq x\leq d\}$ is an integer polyhedron.

Let $\pset$ be the set of rational polyhedra $P\subseteq \R^n$ (for some $n$) satisfying the following condition:
\begin{equation}
\begin{array}{c}
\text{For any }k\in \Zp, r\in \{0,\ldots,k\}  \text{ and } w\in \Z^n   
\\
\text{ the intersection } rP\cap (w-(k-r)P) \text{ is box-integer.}  \label{eq:condition}
\end{array}
\end{equation}

\begin{theorem} \label{basic theorem}
If $P\in \pset$, then P has the Integer \Car \! \! Property.
\end{theorem}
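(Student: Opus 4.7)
My plan is to proceed by induction on the pair $(\dim P, k)$ in lexicographic order. The base cases ($\dim P = 0$, so $P$ is a single integer point; or $k = 1$, where $w$ itself is the unique term) are immediate. For the inductive step I will first reduce to the case $w/k \in \mathrm{relint}(P)$: if the minimal face $F$ of $P$ containing $w/k$ is proper, one checks that $F \in \pset$ by intersecting the polytopes appearing in (\ref{eq:condition}) with $\aff(F)$ (which is cut out by integer-coefficient equations, hence preserves box-integrality), and then the inductive hypothesis applies to $(F, k, w)$.

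Assume now $w/k \in \mathrm{relint}(P)$. The heart of the argument is to produce an integer point $x_1 \in P \cap \Z^n$ and a positive integer $n_1 < k$ such that $w - n_1 x_1 \in (k - n_1) F'$ for some proper face $F' \subsetneq P$ with $x_1 \notin F'$. Granted such a triple $(x_1, n_1, F')$, the inductive hypothesis applied to $(F', k - n_1, w - n_1 x_1)$ (valid since $\dim F' < \dim P$ and $F' \in \pset$) yields affinely independent $y_2, \ldots, y_t \in F' \cap \Z^n$ with $w - n_1 x_1 = \sum_{j \geq 2} m_j y_j$ and $\sum_j m_j = k - n_1$; combining with $(x_1, n_1)$ gives the desired decomposition, and $\{x_1, y_2, \ldots, y_t\}$ is affinely independent because $x_1 \notin \aff(F') \supseteq \aff\{y_j\}$.

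To exhibit such a triple I will use condition (\ref{eq:condition}). For $n_1 = 1$, the polytope $Q := P \cap (w - (k-1)P)$ is box-integer, so every vertex of $Q$ is an integer candidate $x_1 \in P \cap \Z^n$ with $w - x_1 \in (k-1)P \cap \Z^n$. A vertex analysis at $x_1$ shows that the linear spans $W_1$ and $W_{F'}$ of the minimal faces of $P$ through $x_1$ and through $(w-x_1)/(k-1)$ satisfy $W_1 \cap W_{F'} = \{0\}$; combined with $w/k \in \mathrm{relint}(P)$, this typically forces $F'$ to be proper and to avoid $x_1$ (otherwise the segment from $x_1$ to $(w-x_1)/(k-1)$ would lie in $F'$, forcing $w/k \in F' \subsetneq P$). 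The main obstacle is that $n_1 = 1$ can fail entirely --- for instance with $P = [0,1]^2$, $k = 4$, $w = (2,2)$, every vertex of $Q$ gives $(w - x_1)/(k-1) \in \mathrm{relint}(P)$. In those cases one must take $n_1 > 1$ and exploit (\ref{eq:condition}) with $r = n_1$: the box-integer polytope $n_1 P \cap (w - (k-n_1)P)$ has integer vertices, and the combinatorial core of the proof is showing that for some $n_1$ and some proper face $F'$, an integer vertex of the face $n_1 P \cap (w - (k-n_1)F')$ can be written as $n_1 x_1$ with $x_1 \in P \cap \Z^n$, thereby producing the required $(x_1, n_1, F')$.
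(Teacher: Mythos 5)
Your reduction to the case $w/k\in\mathrm{relint}(P)$ is sound in substance, though your justification that the minimal face $F$ lies in $\pset$ is not: intersecting a box-integer polyhedron with an affine subspace cut out by integer-coefficient equations does \emph{not} in general preserve box-integrality (take $[0,2]^2$ intersected with $x_1+2x_2=2$: the resulting segment from $(2,0)$ to $(0,1)$ meets the box $x_1\leq 1$ in a segment with the fractional vertex $(1,\tfrac12)$). The correct argument, which is Lemma \ref{properties of P} of the paper, is that $rF\cap(w-(k-r)F)$ is empty or a \emph{face} of $rP\cap(w-(k-r)P)$, and faces of box-integer polyhedra are box-integer. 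The genuine gap, however, is the step you yourself label ``the combinatorial core'': the existence of a triple $(x_1,n_1,F')$ with $x_1\in P\cap\Z^n$, $0<n_1<k$, $F'$ a proper face avoiding $x_1$, and $w-n_1x_1\in(k-n_1)F'$. This is never proved; ``typically forces'' and ``one must take $n_1>1$'' are not arguments, and your own example $P=[0,1]^2$, $k=4$, $w=(2,2)$ shows the natural first attempt fails. Worse, the route you propose --- finding an integer vertex of $n_1P\cap(w-(k-n_1)F')$ that is exactly of the form $n_1x_1$ with $x_1$ integral --- faces a divisibility obstruction that condition (\ref{eq:condition}) gives you no way to handle: box-integrality guarantees integer vertices, not vertices that are $n_1$ times an integer point.

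The paper's proof is designed precisely to avoid this obstruction, and the contrast explains why your plan stalls. Instead of demanding that one part of the decomposition be a single repeated integer point, the paper picks a coordinate with $k\nmid w(n)$, writes $w(n)=kq+r$, forms the two parallel slices $P_1=\{x\in P\mid x(n)=q+1\}$ and $P_2=\{x\in P\mid x(n)=q\}$ (both of dimension $<\dim P$), and takes an integer \emph{vertex} $y$ of $Q=rP_1\cap(w-(k-r)P_2)$, which is nonempty and box-integer by (\ref{eq:condition}). No divisibility of $y$ is needed: \emph{both} $y$ and $w-y$ are decomposed recursively, inside the minimal faces $F_1$ of $rP_1$ and $F_2$ of $w-(k-r)P_2$ containing $y$, which correspond to lower-dimensional faces of $P$ lying in $\pset$. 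Affine independence of the union of the two recursive decompositions then follows from the key geometric fact that, $y$ being a vertex of $Q$, the affine hulls of $F_1$ and $F_2$ meet only in $\{y\}$. Your scheme gets independence more cheaply (from $x_1\notin\aff(F')$), but only at the price of the much stronger structural claim you cannot establish; repairing it would mean proving that claim for all $P\in\pset$, which appears as hard as the theorem itself. The symmetric two-sided recursion is the missing idea.
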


Before we prove this theorem we first need a few results describing some properties of $\pset$.

\begin{lemma}\label{int decomp}
Every $P\in \pset$ has the integer decomposition property.
\end{lemma}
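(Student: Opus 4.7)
The plan is to prove this by induction on $k$. The base case $k=1$ is trivial: any integer $w \in P$ is already an ``integer decomposition'' of $w$ into one piece. So fix $k \geq 2$ and assume the statement holds for $k-1$.

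Given an integer vector $w \in kP$, the goal is to peel off a single integer point $x_1 \in P \cap \Z^n$ with $w - x_1 \in (k-1)P$; then the induction hypothesis applied to $w - x_1 \in (k-1)P \cap \Z^n$ finishes the job. To find such an $x_1$, I would apply condition (\ref{eq:condition}) with $r=1$ to conclude that
\[
Q := P \cap (w - (k-1)P)
\]
is box-integer; in particular (taking $c = -\infty$ and $d = +\infty$) $Q$ is itself an integer polyhedron. Note $Q$ is nonempty, since $w \in kP$ means $w = ky$ for some $y \in P$, and this $y$ lies in $Q$ because $w - y = (k-1)y \in (k-1)P$. Hence $Q$ has an integer vertex, which I take to be $x_1$; by construction $x_1 \in P \cap \Z^n$ and $w - x_1 \in (k-1)P \cap \Z^n$.

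The induction hypothesis then yields integer vectors $x_2,\ldots,x_k \in P$ with $w - x_1 = x_2 + \cdots + x_k$, and concatenating gives the required decomposition $w = x_1 + \cdots + x_k$.

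The substantive content is essentially packaged into condition (\ref{eq:condition}): the only nontrivial thing to verify is the nonemptiness of $Q$, which is immediate from $w \in kP$. The box-integrality is much stronger than what the lemma needs (only ordinary integrality of $Q$ is used here), but this extra strength is presumably what will be exploited in the full proof of Theorem~\ref{basic theorem} to additionally control affine independence of the $x_i$'s.
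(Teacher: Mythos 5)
Your proof is correct and takes essentially the same route as the paper: both peel off a single integer point of $Q=P\cap(w-(k-1)P)$, whose nonemptiness is witnessed by $\tfrac{1}{k}w\in Q$, and recurse on $w-x_1\in (k-1)P$. One cosmetic remark: since $Q$ need not be pointed, it may have no vertices at all, so you should take an integer \emph{point} rather than an integer \emph{vertex}; this is exactly what integrality of $Q$ (every nonempty face contains an integer point) provides, and is how the paper phrases it.
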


\begin{proof}
Let $P\subseteq \R^n$ be in $\pset,$ let $k$ be a positive integer and let $w\in kP\cap \Z^n$.
Note that $P\cap (w-(k-1)P)$ is not empty since it contains $\tfrac{1}{k}w=w-\tfrac{(k-1)w}{k}$.

Since $P\in \pset$, the intersection $P\cap (w-(k-1)P)$ is box integer.
Take any integer point $x_k\in P\cap (w-(k-1)P)$ and note that $w-x_k\in (k-1)P\cap \Z^n$.
So by induction we can write $w=(x_1+\ldots+x_{k-1})+x_k$ with $x_i \in P\cap \Z^n$ for all $i$. 
\end{proof}

As a consequence of Lemma \ref{int decomp}, every $P\in \pset$ is an integer polyhedron.
Indeed, let $F$ be a face of $P$ and $x\in F$ a rational point. 
Take $k\in \Zp$ such that $kx\in \Z^n$. By Lemma \ref{int decomp} we can write $kx=\sum_{i=1}^k x_i$ with $x_i\in P\cap \Z^n$. Clearly, $x_1,\ldots, x_k\in F$, hence $F$ contains an integer vector.

\begin{lemma}\label{properties of P}
The collection $\pset$ is closed under taking faces and intersections with a box. 
\end{lemma}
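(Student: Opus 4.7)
The plan is to reduce both closure properties to two easy facts about box-integer polyhedra themselves: (i) the intersection of a box-integer polyhedron with an integer box is again box-integer; and (ii) every face of a box-integer polyhedron is box-integer. Both are immediate from the definition: if $Q$ is box-integer and $B$ is an integer box, then for any integer box $B'$ we have $(Q \cap B) \cap B' = Q \cap (B \cap B')$ with $B \cap B'$ still an integer box, so $Q \cap B$ is box-integer; and if $G$ is a face of $Q$, then $G \cap B'$ is a face of the integer polyhedron $Q \cap B'$, hence itself an integer polyhedron.

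For the intersection of $P \in \pset$ with a box $B = \{x : c \leq x \leq d\}$ (where $c,d \in \Zb^n$), I would use the algebraic identity
\begin{equation*}
r(P \cap B) \cap \bigl(w - (k-r)(P \cap B)\bigr) \;=\; \bigl[rP \cap (w-(k-r)P)\bigr] \cap \bigl[rB \cap (w-(k-r)B)\bigr],
\end{equation*}
which follows because scalar multiplication and translation both distribute over intersection. Since $r$, $k-r$, and $w$ are integer, the second bracket is an integer box, while the first is box-integer by $P \in \pset$; fact (i) then finishes this case.

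For closure under taking a face $F$ of $P$, I would fix a linear functional $c^{\transp}$ with $F = \arg\max_P c^{\transp}x$ and maximum value $M$. A direct check shows
\begin{equation*}
rF = \{y \in rP : c^{\transp} y = rM\}, \qquad w - (k-r)F = \{y \in w-(k-r)P : c^{\transp} y = c^{\transp} w - (k-r)M\},
\end{equation*}
so $rF \cap (w-(k-r)F)$ is cut out from $rP \cap (w-(k-r)P)$ by two linear equations on $c^{\transp}y$. These equations are either inconsistent (making the intersection empty) or consistent (making the intersection a face of the ambient polyhedron). Since the ambient is box-integer by $P \in \pset$, fact (ii) concludes the argument.

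The only real bookkeeping is in handling the degenerate cases $r = 0$ or $k-r = 0$ under the convention $0 \cdot P = \{0\}$ (for nonempty $P$), and in verifying that $rF$ and $w-(k-r)F$ are genuinely faces of their ambient polyhedra cut out by the displayed linear equations. Neither represents a substantive obstacle: the heart of the argument is the simple algebraic identity above together with the observation that scalar multiples and translates of faces remain faces.
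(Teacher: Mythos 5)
Your proof is correct and takes essentially the same approach as the paper: your box case uses the identical distributive identity $r(P\cap B)\cap\bigl(w-(k-r)(P\cap B)\bigr)=\bigl[rP\cap(w-(k-r)P)\bigr]\cap\bigl[rB\cap(w-(k-r)B)\bigr]$, and your supporting-functional argument for the face case is just an explicit verification of the paper's opening observation that an intersection of faces of two polyhedra is either empty or a face of the intersection, combined (as in the paper) with the fact that faces of box-integer polyhedra are box-integer.
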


\begin{proof}
First note that if $P_1$ and $P_2$ are two polyhedra and $F_i\subseteq P_i$ are faces, then either $F_1\cap F_2=\emptyset$ or $F_1\cap F_2$ is a face of $P_1\cap P_2$.
 
Now let $P\in \pset$ and let $F$ be a face of $P$. 
To see that $F$ satisfies \eqref{eq:condition}, let $k\in \Zp, r\in \{0,\ldots,k\}$ and let $w\in\Z^n$. 
If $rF\cap (w-(k-r)F)$ is empty there is nothing to prove. Otherwise, it is a face of $rP\cap (w-(k-r)P)$ and hence box-integer.

To see the second assertion, let $c\leq d\in \Zb^n$ and consider the polyhedron $P':=\{x\in P\mid c\leq x\leq d\}$. Let $w\in \Z^n$, $k\in \Zp$ and $r\in \{0,\ldots,k\}$.
Note that 
$rP'\cap (w-(k-r)P')$ is equal to
\begin{equation}
\{x\in rP\cap (w-(k-r)P)\mid rc\leq x\leq rd, w-(k-r)d\leq x\leq w-(k-r)c\}.
\end{equation}
Hence $rP'\cap (w-(k-r)P')$ is the intersection of the box-integer polyhedron $rP\cap (w-(k-r)P)$ with a box, which is again box-integer.
\end{proof}

Note that Lemma \ref{properties of P} together with the observation below Lemma \ref{int decomp} imply the following.

\begin{proposition} 	\label{box int}
Every $P\in \pset$ is a box-integer.
\end{proposition}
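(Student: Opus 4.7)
The plan is to combine the two ingredients pointed to by the paper's remark: the closure of $\pset$ under intersection with a box, and the fact that every member of $\pset$ is an integer polyhedron.

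More precisely, I would proceed as follows. Fix $P\in\pset$ and an arbitrary pair of vectors $c\leq d$ in $\Zb^n$. Set $P':=\{x\in P\mid c\leq x\leq d\}$. By the second statement of Lemma \ref{properties of P}, intersecting with a box preserves membership in $\pset$, so $P'\in\pset$. Now apply the observation established just after Lemma \ref{int decomp} (derived from the integer decomposition property) to $P'$: every polyhedron in $\pset$ is integer. Hence $P'$ is an integer polyhedron. Since $c$ and $d$ were arbitrary in $\Zb^n$ with $c\le d$, this is exactly the definition of $P$ being box-integer.

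There is essentially no obstacle: the proposition is a direct composition of the two cited facts, and the only mild thing to check is that the observation following Lemma \ref{int decomp} is formulated generally enough to apply to $P'$ rather than $P$ itself, which it is (it only uses that $P'$ lies in $\pset$). The proof is therefore a two-line remark.
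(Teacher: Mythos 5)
Your proof is correct and is exactly the argument the paper intends: it explicitly says that Lemma \ref{properties of P} (closure under intersection with a box) together with the observation below Lemma \ref{int decomp} (every member of $\pset$ is an integer polyhedron) imply the proposition, which is precisely your two-step composition.
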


We are now ready to prove Theorem \ref{basic theorem}.
\begin{proof}[Proof of Theorem \ref{basic theorem}.]
Let $P\subseteq \R^n$ be a polyhedron in $\pset$.
The proof is by induction on $\dim(P)$. 

The case $\dim(P)=0$ is clear, so we may assume $\dim(P)\geq 1$.
Let $k$ be a positive integer and let $w\in kP\cap \Z^n$. We may assume that $P$ is polytope by replacing $P$ by 
\begin{equation}
\{x\in P\mid \lfloor\tfrac{1}{k}w(i)\rfloor\leq x(i)\leq \lceil\tfrac{1}{k}w(i)\rceil \text{ for all $i$}\}.
\end{equation}
If $w(i)$ is a multiple of $k$ for each $i=1,\ldots, n$, we write $w=k\cdot \tfrac{1}{k}w$ and we are done.
We may therefore assume that $k$ does not divide $w(n)$ and write $w(n)=kq+r$ with $q\in \Z$ and $r\in \{1,\ldots k-1\}$.

Note that $w\in k(\{x\in P\mid q\leq x(n)\leq q+1\})$.
So by Lemma \ref{int decomp} and \ref{properties of P} we can write $w=x_1+\ldots+ x_k$ with $x_i\in P\cap\Z^n$ and with $q\leq x_i(n)\leq q+1$ for all $i$.
We may assume that $x_i(n)=q+1$ for $i=1,\ldots, r$ and $x_i(n)=q$ for $i=r+1,\ldots,k$.

We denote $P_1:=\{x\in P\mid x(n)=q+1\}$ and $P_2:=\{x\in P\mid x(n)=q\}$.
Set $w':=x_1+\ldots +x_r$. This gives a decomposition of $w$ into two integer vectors
\begin{eqnarray}
w'&\in & rP_1,	\nonumber
\\
w-w'= x_{r+1}+\ldots+x_{k}&\in& (k-r)P_2.
\end{eqnarray}

Define 
\begin{eqnarray}
Q&:=&rP_1\cap (w-(k-r)P_2)\nonumber\\
&=&\left(rP\cap (w-(k-r)P)\right)\cap\{x\mid x(n)=q+1\}
\end{eqnarray}
and note that $Q$ is non-empty as it contains $w'$. 
Let $y\in Q$ be an integral vertex.
Such a $y$ exists because $rP\cap (w-(k-r)P)$ is box-integer by assumption.
Let $F_1$ be the inclusionwise minimal face of $rP_1$ containing $y$ and let $F_2$ be the inclusionwise minimal face of $w-(k-r)P_2$ containing $y$.
Let $H_i=\aff(F_i)$.
Then
\begin{equation}\label{eq:H1 intersection H2}
H_1\cap H_2=\{y\}.
\end{equation}
Indeed, every supporting hyperplane of $rP_1$ containing $y$ should also contain $F_1$, by minimality of $F_1$ hence it contains $H_1$.
Similarly, every supporting hyperplane of $w-(k-r)P_2$ containing $y$ also contains $H_2$.
Since $y$ is a vertex of $Q$, it is the intersection of the supporting hyperplanes of the two polytopes containing $y$ and the claim follows.

Let $F_i'$ be the face of $P_i$ corresponding to $F_i$ ($i=1,2$). That is: $F_1=rF_1'$ and $F_2=w-(k-r)F_2'$.
Since $\dim F_i'\leq \dim P_i\leq \dim P-1$, we inductively obtain integer decompositions 
\begin{equation}
y=m_1x_1+\cdots+m_sx_s,\qquad w-y=n_1y_1+\cdots+n_ty_t,
\end{equation}
where $x_1,\ldots,x_s\in F_1'$ are affinely independent integer vectors, $y_1,\ldots, y_t\in F_2'$ are affinely independent integer vectors and $m_1+\cdots+m_s=r,\ n_1+\cdots+n_t=k-r$. 

To complete the proof, we show that $x_1,\ldots,x_s,y_1,\ldots,y_t$ are affinely independent. Suppose there is an affine dependence 
\begin{equation}
\sum_{i=1}^s\lambda_ix_i+\sum_{i=1}^{t}\mu_iy_i=0,\qquad \sum_i\lambda_i+\sum_i\mu_i=0.
\end{equation}
We need to show that all $\lambda_i$ and all $\mu_i$ are zero.

By considering the last coordinate, we see that $(q+1)\sum_i\lambda_i+q\sum_i\mu_i=0$ and hence $\sum_i\lambda_i=\sum_i\mu_i=0$.

Since $y,rx_1,\ldots,rx_s\in F_1$ and $\sum_i\tfrac{\lambda_i}{r}=0$, it follows from 
\begin{equation}
y+\sum_i\lambda_ix_i=y+\sum_i\tfrac{\lambda_i}{r}(rx_i)
\end{equation}
that $y+\sum_i\lambda_ix_i$ is in the affine hull $H_1$ of $F_1$.
Similarly, $y+\sum_i\lambda_ix_i$ is in the affine hull $H_2$ of $F_2$, since $y,w-(k-r)y_1,\ldots,w-(k-r)y_t\in F_2$ and
\begin{equation}
y+\sum_i\lambda_ix_i=y-\sum_i\mu_iy_i=y+\sum_i\tfrac{\mu_i}{k-r}(w-(k-r)y_i).
\end{equation}
It follows by \eqref{eq:H1 intersection H2} that $y+\sum_i\lambda_ix_i=y$. By affine independence of the $x_i$, this implies that $\lambda_1=\cdots=\lambda_s=0$. Hence $\sum_i\mu_iy_i=0$, which implies by affine independence of the $y_i$ that $\mu_1=\cdots=\mu_t=0$.
\end{proof}

We end this section by showing that projections of polyhedra in $\pset$ also have the ICP.

\begin{theorem}\label{projection}
Let $m\leq n$ and let $\pi:\R^n \to \R^m$ be the projection onto the first $m$ coordinates.
If $P\subset \R^n$ and $P\in \pset$, then $\pi(P)$ has the ICP.
\end{theorem}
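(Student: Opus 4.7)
The plan is a lift--decompose--project argument based on Theorem \ref{basic theorem}. Given $w\in k\pi(P)\cap\Z^m$, I would first produce an integer vertex $\tilde w$ of the polytope $kP\cap\pi^{-1}(w)=\{x\in kP:x(i)=w(i),\ 1\leq i\leq m\}$, then apply Theorem \ref{basic theorem} to $\tilde w$, and finally project, showing that affine independence is preserved under the projection.

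The key geometric observation is the following. Let $G$ be the minimal face of $P$ with $\tilde w/k$ in its relative interior. The vertex property of $\tilde w$ in $kP\cap\pi^{-1}(w)$ forces $\aff(kG)\cap\pi^{-1}(w)=\{\tilde w\}$; otherwise a small perturbation of $\tilde w$ inside $kG\cap\pi^{-1}(w)$ would witness that $\tilde w$ is not extreme. This is equivalent to saying the direction subspace of $\aff(G)$ meets $\ker\pi$ only at $\{0\}$, so $\pi$ restricted to $\aff(G)$ is injective.

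Now apply Theorem \ref{basic theorem} to $\tilde w\in kP\cap\Z^n$ to obtain a decomposition $\tilde w=\sum_{i=1}^{t}n_i x_i$ with $x_i\in P\cap\Z^n$ affinely independent, $n_i\in\Z_{>0}$, and $\sum_i n_i=k$. Since $\tilde w/k$ is a convex combination of the $x_i$ with positive weights and lies in the relative interior of the face $G$, each $x_i$ must itself lie in $G$ (the standard supporting-hyperplane argument). Projecting yields $w=\sum_i n_i\pi(x_i)$ with $\pi(x_i)\in\pi(P)\cap\Z^m$, and the injectivity of $\pi|_{\aff(G)}$ promotes affine independence of the $x_i$ in $\aff(G)$ to affine independence of the $\pi(x_i)$ in $\R^m$, giving the desired ICP decomposition of $w$.

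The main obstacle is the first step: ensuring that $kP\cap\pi^{-1}(w)$ actually has an integer vertex. This does not follow immediately from Proposition \ref{box int}, which only asserts box-integrality of $P$, not of $kP$. I expect this to be addressed by a separate argument extending box-integrality of $P$ to analogous statements about $kP$ intersected with integer affine subspaces of the form $\{x:x(i)=w(i),\ 1\leq i\leq m\}$ with $w\in\Z^m$, making full use of the defining condition \eqref{eq:condition} of $\pset$ together with the fact that $P$ is integer. Once this technical point is resolved, the geometric argument above delivers the ICP for $\pi(P)$.
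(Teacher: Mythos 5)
Your geometric core is sound and is essentially the paper's argument in a slightly different dress: the paper takes an inclusionwise minimal face $F$ of $kP$ meeting $\pi^{-1}(\{w\})$ and proves $\pi|_F$ injective, whereas you take a vertex $\tilde w$ of $kP\cap\pi^{-1}(w)$ and get injectivity of $\pi$ on the affine hull of the minimal face containing $\tilde w$ in its relative interior; both routes then decompose via Theorem \ref{basic theorem} and project. However, the step you explicitly leave open --- producing the integer vertex --- is a genuine gap in what you wrote, and it is resolved far more cheaply than the ``separate argument'' you anticipate. The key observation is that $kP\in\pset$: for $k'\in\Zp$, $r'\in\{0,\ldots,k'\}$ and $w'\in\Z^n$ one has
\begin{equation*}
r'(kP)\cap\bigl(w'-(k'-r')(kP)\bigr)=(kr')P\cap\bigl(w'-(kk'-kr')P\bigr),
\end{equation*}
which is box-integer by condition \eqref{eq:condition} applied to $P$ with parameters $kk'$ and $kr'$. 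Proposition \ref{box int} then makes $kP$ box-integer, and since $\pi^{-1}(w)=\{x\mid x(i)=w(i),\ i=1,\ldots,m\}$ is a box (with entries $w(i)$ and $\pm\infty$ allowed by the definition over $\Zb^n$), the set $kP\cap\pi^{-1}(w)$ is an integer polyhedron, so any vertex of it is integral. This is exactly the paper's one-line resolution.

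There is a second gap you do not flag: you call $kP\cap\pi^{-1}(w)$ a polytope, but polyhedra in $\pset$ may be unbounded (extended polymatroids, TU polyhedra), and then $kP\cap\pi^{-1}(w)$ need not have any vertex at all --- integrality of the polyhedron only guarantees integer points in its minimal faces, which can be positive-dimensional, and your perturbation argument genuinely needs an extreme point to start from. So you must first reduce to bounded $P$, as the paper does: choose $N$ large enough that $\pi^{-1}(\{w\})\cap k\bigl(P\cap[-N,N]^n\bigr)\neq\emptyset$ and replace $P$ by $P\cap[-N,N]^n$, which stays in $\pset$ by Lemma \ref{properties of P}. With these two repairs (the boundedness reduction, and $kP\in\pset$ yielding the integer vertex), the rest of your proof --- the vertex property forcing $\aff(kG)\cap\pi^{-1}(w)=\{\tilde w\}$, each $x_i$ lying in $G$ because $\tilde w/k$ is a positive convex combination of the $x_i$ lying in the face $G$, and injectivity of $\pi|_{\aff(G)}$ carrying affine independence through the projection --- is correct and complete.
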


\begin{proof}
Define $Q:=\pi(P)$.
Let $k$ be a positive integer and let $w\in kQ\cap \Z^m$.
We may assume that $P$ is bounded.
Indeed, taking $N\in \Zp$ large enough such that $\pi^{-1}(\{w\})\cap [-kN, kN]^n$ is not empty.
We can replace $P$ by 
\begin{equation}
P\cap [-N,N]^n,
\end{equation}
and replace $Q$ by $\pi(P\cap [-N,N]^n)$.

Let $F\subseteq kP$ be an inclusionwise minimal face intersecting $\pi^{-1}(\{w\})$.
Then 
\begin{equation}
\pi|_F \text{ is injective. }		\label{eq:injective}
\end{equation}
Indeed, suppose that $\pi(a)=\pi(b)$ for distinct $a,b\in F$. Let $x\in F\cap \pi^{-1}(\{w\})$. Then since $F$ is bounded, the line $x+\R(b-a)$ intersects $F$ in a smaller face, contradicting the minimality of $F$.

Now note that $F\cap \pi^{-1}(\{w\})$ is the intersection of $F$ with the box 
\begin{equation}
\{x\in \R^n\mid x(i)=w(i), i=1,\ldots,m\}.
\end{equation}
Since $P\in \pset$, also $kP\in \pset$ and so $kP$ is box-integer by Proposition \ref{box int}.
This in turn implies that $F$ is box-integer. 
Hence we can lift $w$ to an integer vector $\hat w\in F\cap \pi^{-1}(\{w\})$.
By Theorem \ref{basic theorem} we can find affinely independent integer vectors $x_1,\ldots,x_t$ in $\tfrac{1}{k}F$ and positive integers $n_1,\ldots,n_t$ such that $n_1+\cdots+n_t=k$ and 
\begin{equation}
\hat w=\sum_{i=1}^t n_ix_i .
\end{equation}
Since $\pi|_F$ is injective, $\pi(x_1),\ldots,\pi(x_t)$ are also affinely independent. Hence 
\begin{equation}
w=\sum_{i=1}^t n_i \pi(x_i)
\end{equation}
is the desired decomposition of $w$. 
\end{proof}

\section{Polyhedra defined by totally unimodular matrices}
In this section we prove that polyhedra defined by (nearly) totally unimodular matrices have the ICP.
Recall that a matrix $A$ is called \emph{totally unimodular} (notation: TU) if for each square submatrix $C$ of $A$, $\det(C)\in\{-1,0,1\}.$ 
For details on TU matrices we refer to \cite{schrijver lin int}.

\begin{theorem}	 \label{tu car}
Let $P:=\{x\in \R^n \mid Ax\leq b\}$, where $A$ is an $m\times n$ TU matrix and $b\in \Z^m$.
Then $P\in \pset$. In particular, every projection of $P$ has the ICP.
\end{theorem}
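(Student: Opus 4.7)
The plan is to verify the defining condition \eqref{eq:condition} directly. Given $k \in \Zp$, $r \in \{0,\ldots,k\}$ and $w \in \Z^n$, I would first write the intersection $rP \cap (w-(k-r)P)$ explicitly as the solution set of the inequality system
\begin{equation}
\begin{pmatrix} A \\ -A \end{pmatrix} x \leq \begin{pmatrix} rb \\ (k-r)b - Aw \end{pmatrix}.
\end{equation}
Since $A$ is TU and $b, w$ are integral (note $Aw \in \Z^m$ because $A$ is an integer matrix), the constraint matrix is TU and the right-hand side is integral.

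Next, to check box-integrality, I would intersect with an arbitrary integer box $\{x \mid c \leq x \leq d\}$ with $c \leq d \in \Zb^n$ (treating coordinates with $\pm\infty$ bounds by simply omitting the corresponding row). This yields the system
\begin{equation}
\begin{pmatrix} A \\ -A \\ I \\ -I \end{pmatrix} x \leq \begin{pmatrix} rb \\ (k-r)b - Aw \\ d \\ -c \end{pmatrix},
\end{equation}
whose constraint matrix is again TU (appending $\pm I$ to a TU matrix preserves the TU property) and whose right-hand side is integral.

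The classical theorem of Hoffman and Kruskal (see \cite{schrijver lin int}) then implies that this system defines an integer polyhedron. Hence $rP \cap (w-(k-r)P)$ is box-integer, confirming $P \in \pset$. The "in particular" statement about projections is then immediate from Theorem \ref{projection}.

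The work here is essentially bookkeeping; there is no real obstacle, since the two key facts---that $[A^\transp,-A^\transp,I,-I]^\transp$ is TU whenever $A$ is, and that TU systems with integer right-hand side define integer polyhedra---are both standard.
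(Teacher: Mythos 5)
Your proposal is correct and follows essentially the same route as the paper: the paper's proof likewise observes that $\left[A^\transp\ {-A^\transp}\ \ I\ {-I}\right]^\transp$ is TU, so that $rP\cap(w-(k-r)P)$ together with any integral box constraints is given by a TU system with integer right-hand side, and then invokes Theorem \ref{projection}. You have merely spelled out the bookkeeping (the explicit inequality system and the appeal to Hoffman--Kruskal) that the paper leaves implicit.
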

\begin{proof}
Since the matrix $\left[A^\transp -A^\transp\ \ I\ -I\right]^\transp$ is TU, it follows that $rP\cap (w-(k-r)P)$ is box-integer for any $w\in \Z^m$ and positive integers $r<k$.
Hence $P\in \pset.$

Theorem \ref{projection} now implies that every projection of $P$ has the ICP.
\end{proof}

A consequence of Theorem \ref{tu car} is that co-flow polyhedra, introduced by Cameron and Edmonds in \cite{cameron edmonds}, have the ICP since they are projections of TU polyhedra, as was shown by Seb\H o in  \cite{sebo2}.

We end this section with an extension of Theorem \ref{tu car} to so-called nearly totally unimodular matrices.
In \cite{Dion} a matrix $A$ is called \emph{nearly totally unimodular} (notation: NTU) if there exists a TU matrix $\hat A$ a row $a$ of $\hat A$ and an integer vector $c$ such that $A=\hat A+ca^\transp. $
For a $m\times n$ NTU matrix $A$ and an integer vector $b$ the integer polyhedron $P_{A,b}$ is defined by
\begin{equation}
P_{A,b}:=\conv(\{z\in \Z^n\mid Ax\leq b\}).
\end{equation}
Note that 
\begin{equation}
P_{A,b}=\conv \big( \bigcup_{s\in \Z} \{y\mid  \hat Ay\leq b-sc, a^\transp y=s\} \big).      \label{eq:PAb}
\end{equation}
In order to show $P_{A,b}$ has the ICP, we will use the following theorem from \cite{Dion}.
 
\begin{theorem} \label{dion ntu}
Let $\hat A$ be a $m\times n$ TU matrix let $a$ be a row of $\hat A$, let $b,c\in \Z^m$ and define $A:=\hat A+ca^\transp$.
Let $k$ be a nonnegative integer and let $w\in \Z^n$.
Write $a^\transp w=qk+r$ with $q,r\in \Z$ and with $0\leq r\leq k-1$.
Equivalent are:
\begin{description}
\item[(i)] $w\in kP_{A,b}$
\item[(ii)]the system
\begin{equation}	\label{eq:system}
\begin{array}{ccc}
\hat Ay&\leq&r(b-(q+1)c)
\\
\hat Ay& \geq & \hat A w+(k-r)(qc-b)
\\
a^\transp y &= & r(q+1)
\end{array}
\end{equation}
is feasible.
\end{description}
\end{theorem}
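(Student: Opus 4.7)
My plan is to exploit the slice representation (\ref{eq:PAb}), setting $Q_s := \{y \mid \hat A y \leq b - sc,\ a^\transp y = s\}$ for each $s \in \Z$. Each $Q_s$ is an integer polytope, since its defining constraints (the rows of $\hat A$ together with $\pm a^\transp$) form a TU matrix and the right-hand side $(b - sc, \pm s)$ is integer; hence $Q_s \subseteq P_{A,b}$, and by (\ref{eq:PAb}) we have $P_{A,b} = \conv\bigl(\bigcup_{s \in \Z} Q_s\bigr)$. The whole proof will reduce to manipulating convex combinations of slice points.

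For the direction (ii) $\Rightarrow$ (i), I will suppose $y$ is feasible for (\ref{eq:system}) and split on $r$. When $r \geq 1$, I will set $y_1 := y/r$ and $y_2 := (w - y)/(k - r)$, and read off from the three constraints (using $a^\transp w = qk + r$) that $y_1 \in Q_{q+1}$ and $y_2 \in Q_q$; then $w/k = (r/k) y_1 + ((k-r)/k) y_2$ exhibits $w/k \in P_{A,b}$. When $r = 0$, adding the first two constraints of (\ref{eq:system}) yields $\hat A w \leq k(b - qc)$, which together with $a^\transp w = qk$ gives $Aw \leq kb$, placing $w/k$ directly in the integer polytope $Q_q \subseteq P_{A,b}$.

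For the direction (i) $\Rightarrow$ (ii), I will start from a convex decomposition $w/k = \sum_{s \in S} \lambda_s z_s$ with $z_s \in Q_s$, $\lambda_s > 0$, and $\sum_s \lambda_s s = q + r/k$. The key step is to consolidate the support into $\{q, q+1\}$ (resp.\ $\{q\}$ if $r = 0$): whenever $s_{\min} < s_{\max}$ lie in $S$ with $s_{\max} - s_{\min} \geq 2$, I pick an intermediate integer $s'$ and form $z' := \mu z_{s_{\min}} + (1-\mu) z_{s_{\max}}$ with $\mu = (s_{\max} - s')/(s_{\max} - s_{\min})$; convexity of $\{x \mid Ax \leq b\}$ together with linearity of $a^\transp$ force $z' \in Q_{s'}$, and transferring weight of size $\min(\lambda_{s_{\min}}/\mu,\ \lambda_{s_{\max}}/(1-\mu))$ from the pair $(s_{\min}, s_{\max})$ to $s'$ preserves the decomposition while zeroing out one coefficient. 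Iterating reduces $|S|$ to at most two, producing $w/k = (r/k) y_1 + ((k-r)/k) y_2$ with $y_1 \in Q_{q+1}$ and $y_2 \in Q_q$; setting $y := r y_1$ (or $y := 0$ when $r = 0$) yields a feasible point for (\ref{eq:system}) by direct substitution.

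The main obstacle I anticipate is giving a clean, uniform treatment of the pair-shifting consolidation together with the $r = 0$ boundary case, where the natural choice degenerates to $y = 0$ and the forward implication amounts to the single-slice containment $w/k \in Q_q$. As a side remark, although (\ref{eq:system}) asks only for real feasibility, its constraint matrix is TU with integer right-hand side, so any feasible $y$ can be taken integer without loss.
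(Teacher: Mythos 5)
The paper itself contains no proof of Theorem \ref{dion ntu}: it is imported verbatim from \cite{Dion}, so there is no internal argument to compare yours against, and a self-contained proof is genuinely useful here. Your route through the slice description \eqref{eq:PAb} is essentially correct. In the direction (ii) $\Rightarrow$ (i), setting $y_1:=y/r$ and $y_2:=(w-y)/(k-r)$, the three constraints of \eqref{eq:system} together with $a^\transp w=qk+r$ do give $y_1\in Q_{q+1}$, $y_2\in Q_q$ and $w=ry_1+(k-r)y_2$, and your $r=0$ case correctly collapses to $w/k\in Q_q$. In the direction (i) $\Rightarrow$ (ii), the crucial (and correct) observation is that $Q_s=\{x\mid Ax\leq b,\ a^\transp x=s\}$, so a convex combination of points of two slices whose $a$-value is an intermediate integer $s'$ lies in $Q_{s'}$; and once the decomposition $w=ry_1+(k-r)y_2$ with $y_1\in Q_{q+1}$, $y_2\in Q_q$ is in hand, the verification that $y:=ry_1$ (respectively $y:=0$ when $r=0$) satisfies \eqref{eq:system} is the direct substitution you describe. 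Your closing remark that feasibility of \eqref{eq:system} may be taken integral is also right, and is exactly what the paper uses in the proof of Theorem \ref{ntu car}.

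One step needs tightening: the assertion that the weight-transfer iteration ``reduces $|S|$ to at most two.'' A single transfer zeroes the coefficient of one extreme slice but introduces the intermediate slice $s'$, so $|S|$ need not drop at that step, and cardinality alone does not prove termination. What does terminate the process is the span of the support: in each transfer either the minimum of the support strictly increases or its maximum strictly decreases, while neither endpoint ever moves outward (the surviving extreme and the new index $s'$ lie strictly inside the old interval). Hence the width $s_{\max}-s_{\min}$ strictly decreases, and after finitely many steps the support lies in two consecutive integers $\{s_0,s_0+1\}$. The endgame should then be made explicit: the weighted average of the slice indices equals $a^\transp w/k=q+r/k$, so for $r>0$ this forces $s_0=q$ with weights $(k-r)/k$ and $r/k$ (a single-slice support is impossible since $q+r/k\notin\Z$), while for $r=0$ it forces the support to be exactly $\{q\}$. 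With this potential-function argument supplied, your proof is complete.
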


\begin{theorem}		\label{ntu car}
Let $\hat A$ be a $m\times n$ TU matrix let $a$ be a row of $\hat A$, let $b,c\in \Z^m$ and define $A:=\hat A+ca^\transp$.
Then $P_{A,b}$ has the ICP.
\end{theorem}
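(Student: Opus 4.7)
The plan is to run the argument of Theorem \ref{basic theorem} almost verbatim, but with the role of Lemma \ref{int decomp} played by Theorem \ref{dion ntu}, and with the two ``slices'' $P_1,P_2$ appearing there replaced by the level sets of $a^\transp$ on $P_{A,b}$. The key observation is that although $P_{A,b}$ itself need not lie in $\pset$, each of its level sets does: for $s\in \Z$ set
\begin{equation*}
P^{(s)} := \{y\in \R^n \mid \hat A y\leq b-sc,\ a^\transp y=s\}.
\end{equation*}
By \eqref{eq:PAb}, every integer point of $P^{(s)}$ lies in $P_{A,b}$, and since $a$ is itself a row of the TU matrix $\hat A$, adjoining $\pm a^\transp$ as two further rows of $\hat A$ keeps the matrix TU. Hence $P^{(s)}\in \pset$ by Theorem \ref{tu car}, so by Theorem \ref{basic theorem} each $P^{(s)}$ has the ICP.

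Given $w\in kP_{A,b}\cap \Z^n$, I would write $a^\transp w=qk+r$ with $0\leq r\leq k-1$ and invoke Theorem \ref{dion ntu}. A direct rewriting shows that the solution set of \eqref{eq:system} is exactly
\begin{equation*}
Q := rP^{(q+1)}\cap \bigl(w-(k-r)P^{(q)}\bigr),
\end{equation*}
and this set is therefore nonempty. The polytope $Q$ is cut out by $\hat A y\leq \alpha$, $-\hat A y\leq \beta$, and $a^\transp y=r(q+1)$, so its defining matrix is TU and Theorem \ref{tu car} gives $Q\in \pset$; in particular $Q$ has an integer vertex $y^*$, and by construction $y^*\in rP^{(q+1)}\cap \Z^n$ while $w-y^*\in (k-r)P^{(q)}\cap \Z^n$.

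To finish, I would transplant the geometric argument from the proof of Theorem \ref{basic theorem}. Let $F_1$ and $F_2$ be the minimal faces of $rP^{(q+1)}$ and $w-(k-r)P^{(q)}$ containing $y^*$; the minimality, together with $y^*$ being a vertex of $Q$, forces $\aff(F_1)\cap \aff(F_2)=\{y^*\}$ as in the proof of Theorem \ref{basic theorem}. Writing $F_1=rF_1'$ and $F_2=w-(k-r)F_2'$, I apply the ICP for $P^{(q+1)}$ and $P^{(q)}$ (which holds by Theorem \ref{basic theorem}) to produce affinely independent integer decompositions $y^*=\sum_i m_i x_i$ with $x_i\in F_1'$, and $w-y^*=\sum_j n_j z_j$ with $z_j\in F_2'$. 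The two groups can be glued because $a^\transp$ takes the constant value $q+1$ on the $x_i$ and $q$ on the $z_j$: any affine relation across both groups splits via $a^\transp$ into two halves, each of which then lies in $\aff(F_1)\cap \aff(F_2)=\{y^*\}$ and must vanish by the intra-group affine independence.

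The main step requiring care is not deep but combinatorial: identifying the solution set of the system \eqref{eq:system} with the intersection $Q = rP^{(q+1)}\cap(w-(k-r)P^{(q)})$, and checking that the compound matrix defining $Q$ (i.e.\ $\hat A$ together with $-\hat A$ and the extra row $\pm a^\transp$) is genuinely TU. Once this bookkeeping is settled, the remaining argument is a transliteration of the proof of Theorem \ref{basic theorem}, with no further induction needed because the two ``half-decompositions'' are handled directly by Theorem \ref{basic theorem} applied to the TU slices $P^{(q)}, P^{(q+1)}$.
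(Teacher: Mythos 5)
Your route is essentially the paper's own: the slices $P^{(q+1)},P^{(q)}$ are exactly the polyhedra $P_1,P_2$ in the paper's proof, the identification of the solution set of \eqref{eq:system} with $rP_1\cap(w-(k-r)P_2)$ via Theorem \ref{dion ntu} is the same, and the vertex-plus-minimal-faces gluing is the same transliteration of Theorem \ref{basic theorem}. However, there is one genuine gap: you call $Q$ a polytope and assert that ``$Q$ has an integer vertex $y^*$.'' TU-ness of the defining system only makes $Q$ an \emph{integer polyhedron} (every minimal face contains an integer point); it does not give $Q$ any vertex at all. The constraints of \eqref{eq:system} are $\hat A y\leq\alpha$, $\hat A y\geq\beta$, $a^\transp y=r(q+1)$, so the lineality space of $Q$ is $\ker\hat A$, which is nontrivial whenever $\hat A$ has more columns than rows (e.g.\ a network matrix). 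In that case $Q$, if nonempty, has no vertices, and your argument collapses: taking $y^*$ in a minimal face instead, one only gets $\aff(F_1)\cap\aff(F_2)\supseteq y^*+\ker\hat A$ rather than $=\{y^*\}$, and the gluing step no longer forces the affine relation to vanish.

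This is precisely why the paper opens its proof with a reduction to the bounded case, which your proposal omits: take a solution $y$ of \eqref{eq:system}, choose $l,u\in\Z^n$ with $rl\leq y\leq ru$ and $w-(k-r)u\leq y\leq w-(k-r)l$, form the NTU matrix $A'$ by appending the rows of $I$ and $-I$ to $A$ and the vector $b'$ by appending $u$ and $-l$ to $b$, and apply Theorem \ref{dion ntu} in the \emph{reverse} direction to conclude $w\in kP_{A',b'}$; since $P_{A',b'}\subseteq P_{A,b}$ is bounded, one may then work with $P_{A',b'}$, and all the polyhedra in the argument become pointed. Note this reduction cannot be done naively (intersecting $P_{A,b}$ with an arbitrary box need not preserve $w\in k(\cdot)$, because $P_{A,b}$ is defined as a convex hull of lattice points); the box must be chosen around a solution of \eqref{eq:system} so that the augmented system stays feasible. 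A second, smaller point: your identification of the solution set of \eqref{eq:system} with $rP^{(q+1)}\cap(w-(k-r)P^{(q)})$ is only correct for $r\geq 1$; when $r=0$ the first block of \eqref{eq:system} reads $\hat Ay\leq 0$, which is not $y\in 0\cdot P^{(q+1)}=\{0\}$, and the paper accordingly treats $r=0$ separately (there $w\in kP_2$ and Theorem \ref{tu car} applies directly).
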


\begin{proof}
Let $k$ be a positive integer and let $w\in kP_{A,b}$. Write $a^\transp w=qk+r$ with $q,r\in \Z$ and with $0\leq r\leq k-1$. We may assume that $P_{A,b}$ is bounded. Indeed, by Theorem \ref{dion ntu} we can take a solution $y$ of (\ref{eq:system}) and let $l,u\in \Z^n$ be such that $rl\leq y\leq ru$ and $w-(k-r)u\leq y\leq w-(k-r)l$. Define the NTU matrix $A'$ and integer vector $b'$ by
\begin{equation}
A':=\left[\begin{array}{r}A\\I\\-I\end{array}\right]\qquad b':=\left[\begin{array}{r}b\\u\\-l\end{array}\right].
\end{equation}
By Theorem \ref{dion ntu} it follows that $w\in kP_{A',b'}$. Since $P_{A',b'}\subseteq P_{A,b}$ is bounded, we can replace $P_{A,b}$ by $P_{A',b'}$. 

Define polyhedra $P_i\subseteq P_{A,b}$ by
 \begin{align}
 P_1:=&\{y\in \R^n\mid \hat Ay\leq b-(q+1)c, a^\transp y=q+1\}	\nonumber
 \\
 P_2:=&\{y\in \R^n\mid \hat Ay\leq b-qc, a^\transp y=q\}.
 \end{align}

If $r=0$ then $w\in kP_2$ and then the claim follows directly from Theorem \ref{tu car}.
So we may assume $r>0$.

Note that the polyhedron defined by \eqref{eq:system} is equal to $rP_1\cap (w-(k-r)P_2)$ and is nonempty by Theorem \ref{dion ntu}.
Let $y$ be a vertex of $rP_1\cap (w-(k-r)P_2)$. Then $y$ is an integral vector because \eqref{eq:system} is defined by a TU matrix.
Let $F_1\subseteq rP_1$ and $F_2\subseteq w-(k-r)P_2$ be the inclusionwise minimal faces containing $y$.

So we now have a decomposition of $w=y+(w-y)$ with $y\in F_1$ and $w-y\in w-F_2$.
Since $P_1$ and $P_2$ are polytopes defined by TU matrices, Theorem \ref{tu car} implies that we can find a positive integer decomposition of $y$ into affinely independent integer vectors $x_1,\ldots,x_t$ from $\tfrac{1}{r}F_1$ and of $w-y$ into affinely independent integer vectors $y_1,\ldots, y_s$ from $\tfrac{1}{k-r}(w-F_2)$.
 
Completely similar to the proof of Theorem \ref{basic theorem} it follows that $x_1,\ldots,x_t$,\\
$y_1,\ldots,y_s$ are affinely independent. Hence combining the decompositions for $y$ and $w-y$ gives the desired decomposition for $w$.
\end{proof}

Interestingly enough, not every polytope defined by a NTU matrix is contained in $\pset$.
Consider the following example.
Let $P:=\{x\in \R^2|x\geq 0, x_1+2x_2 \leq 2\}$. 
This is an integer polytope, but not box integer (take intersection with $x_1\leq 1$).
But $P$ is defined by a NTU matrix.
Namely, define 
\begin{equation}
A:=\left[\begin{array}{rr}-1&0\\0&-1\\1&2\end{array}\right],\quad b:=\left[\begin{array}{r}0\\0\\2\end{array}\right],
\end{equation}
then $P=P_{A,b}$.

This shows that there exist polytopes having the ICP, which are not projections of polytopes in $\pset$, as box-integrality is maintained under projections.

\section{The (poly)matroid base polytope}
In his paper on testing membership in matroid polyhedra, Cunningham \cite{Cunningham} asked for an upper bound on the number of different bases needed in a representation of a vector as a nonnegative integer sum of bases. It follows from Edmonds matroid partitioning theorem \cite{Edmonds} that the incidence vectors of matroid bases form a Hilbert base for the pointed cone they generate. Hence denoting by $n$ the size of the ground set of the matroid, the upper bound of $2n-2$ applies by Seb\H o \cite{Sebo}. This bound was improved by de Pina and Soares \cite{dePina} to $n+r-1$, where $r$ is the rank of the matroid. Chaourar \cite{Chaourar} showed that an upper bound of $n$ holds for a certain minor closed class of matroids.

In this section we show that the (poly)matroid base polytope has the ICP. This in particularly implies that the upper bound of $n$ holds for all matroids. 
Furthermore, we show that the intersection of any two gammoid base polytopes has the ICP.

First we introduce the basic notions concerning submodular functions. For background and more details, we refer the reader to \cite{Fujishige,Schrijver}.

Let $E$ be a finite set and denote its power set by $2^E$. A function $f:2^E\to \Z$ is called \emph{submodular} if for any $A,B\subseteq E$ the inequality $f(A)+f(B)\geq f(A\cup B)+f(A\cap B)$ holds. 
A function $g:2^E \to \Z$ is called \emph{supermodular} if $-g$ is submodular.
Consider the following polyhedra
\begin{align}
EP_f:=&\{x\in \R^E\mid x(U)\leq f(U)\text{ for all }U\subseteq E\}	\nonumber
\\
P_f:=&\{x\in EP_f \mid  x(U)\geq 0 \text{ for all } U\subseteq E\}
\\
B_f:=&\{x\in EP_f\mid x(E)=f(E)\}.	\nonumber
\end{align}
The polyhedron $EP_F$ is called the \emph{extended polymatroid} associated to $f,$ $P_f$ is called the \emph{polymatroid} associated to $f$ and $B_f$ is called the \emph{base polytope} of $f$. 
Observe that $B_f$ is indeed a polytope, since for $x\in B_f$ and $e\in E$, the inequalities $f(E)-f(E-e)\leq x(e)\leq f(\{e\})$ hold, showing that $B_f$ is bounded. 

A submodular function $f:\pset(E)\to \Z$ is the rank function of a matroid $M$ on $E$ if and only if $f$ is nonnegative, nondecreasing and $f(U)\leq |U|$ for every set $U\subseteq E$. 
In that case, $B_f$ is the convex hull of the incidence vectors of the bases of $M$.

Our main tool for proving that $EP_f$ has the ICP is the following result from \cite{Schrijver}, which is similar to Edmonds' polymatroid intersection theorem \cite{Edmonds}.

\begin{theorem}\label{sub super}
Let $f,g:2^E\to \Z$ be two set functions.
If $f$ is submodular and $g$ is supermodular,
then
\begin{equation}
\{x\in \R^E\mid g(U) \leq x(U)\leq f(U), \text{ for all } U\subseteq E\}	\label{eq:sub super int}
\end{equation}
is box-integer.
\end{theorem}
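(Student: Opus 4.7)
The statement asserts box-integrality of the ``sandwich'' polyhedron defined by an integer submodular upper bound $f$ and an integer supermodular lower bound $g$; this is a classical result appearing in Schrijver's book \cite{Schrijver}. My plan is to prove box-total dual integrality (box-TDI) of the defining linear system, which directly implies box-integrality.

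Fix any integer weight $w \in \Z^E$ and integer bounds $c \leq d$ in $\Zb^E$, and consider the LP $\max\{w^\transp x : x(U)\leq f(U),\ x(U)\geq g(U)\ \forall U\subseteq E,\ c\leq x\leq d\}$, whose dual has multipliers $y_U, z_U \geq 0$ on the set constraints and $p^+, p^- \geq 0$ on the box. Among optimal dual solutions, I would pick one minimizing the secondary objective $\sum_U |U|(|E|-|U|)(y_U+z_U)$. The core step is the standard uncrossing argument: if $y_U, y_V > 0$ for a crossing pair $U, V$ (incomparable with $U \cap V \neq \emptyset$), then shifting $(y_U, y_V) \mapsto (y_U - \epsilon, y_V - \epsilon)$ and $(y_{U \cap V}, y_{U \cup V}) \mapsto (y_{U \cap V} + \epsilon, y_{U \cup V} + \epsilon)$ preserves dual feasibility (since $\mathbf{1}_U + \mathbf{1}_V = \mathbf{1}_{U \cap V} + \mathbf{1}_{U \cup V}$), weakly improves the dual objective by submodularity of $f$, and strictly decreases the secondary objective because $|U \cap V|^2 + |U \cup V|^2 > |U|^2 + |V|^2$ for crossing sets of equal total size---contradicting the choice of dual. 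Hence the support $\mathcal{C}_y$ of $y$ forms a chain; the symmetric argument using supermodularity of $g$ shows the support $\mathcal{C}_z$ of $z$ is also a chain.

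To conclude, I would show that the tight-constraint matrix---the indicator rows of the sets in $\mathcal{C}_y \cup \mathcal{C}_z$ together with the active box rows $\pm \mathbf{e}_i$---is totally unimodular, implying an integer primal optimum for every integer right-hand side. After refining the ground set to the atoms of the common refinement of both chains, each chain set becomes a prefix along one axis of a grid of atoms, and the resulting $0/1$-matrix (together with the box rows) can be realized as a network matrix on a directed tree encoding both chains, or verified TU by the Ghouila--Houri criterion. Total unimodularity combined with integer right-hand sides then yields box-integrality. The main obstacle is this final step: the uncrossing reduction is routine, but carefully establishing TU-ness of the combined two-chain-plus-box constraint matrix is the technical heart of the proof, and is carried out in detail in \cite{Schrijver}.
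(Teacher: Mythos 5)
The paper itself offers no proof of this theorem: it is imported verbatim as ``the following result from \cite{Schrijver}'' and used as a black box in the proof of Theorem \ref{car polymatroid}. So the only meaningful comparison is with the proof in that reference, and your sketch follows essentially that route: uncross an optimal dual solution so that the supports of the $f$-side and $g$-side multipliers become nested families, show that the corresponding rows together with the box rows form a TU matrix, and conclude via the principle that a TU optimal dual support for every objective yields box-TDIness, hence box-integrality for integer data. The skeleton is sound; in particular your two descriptions of the potential function are consistent, because $\sum_U |U|(y_U+z_U)$ is invariant under uncrossing, so decreasing $\sum_U|U|(|E|-|U|)(y_U+z_U)$ is equivalent to increasing $\sum_U|U|^2(y_U+z_U)$, and the inequality $|U\cap V|^2+|U\cup V|^2>|U|^2+|V|^2$ is exactly what is needed.

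Two steps need repair, though neither is fatal. First, uncrossing only pairs that are ``incomparable with $U\cap V\neq\emptyset$'' produces a \emph{laminar} support, not a chain: disjoint incomparable sets survive. Either also uncross disjoint pairs --- the identity $\chi^U+\chi^V=\chi^{U\cap V}+\chi^{U\cup V}$, submodularity, and the strict decrease of the potential all still apply, and the new row $\chi^\emptyset=0$ is harmless --- or weaken the claim to laminarity and observe that the union of two laminar families is still TU by the same alternating-sign Ghouila--Houri argument (within a laminar family the sets containing a fixed element form a chain). Second, the sentence ``the tight-constraint matrix is TU, implying an integer primal optimum for every integer right-hand side'' is not a valid inference as written: total unimodularity of a \emph{subset} of the rows says nothing directly about primal integrality, since the optimal face is cut out by those equalities together with all the remaining inequalities of the system. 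The correct finish, consistent with the box-TDI plan you announce at the start, goes through the dual: restrict the dual LP to the support rows plus the box rows; this restricted program has a TU constraint matrix and integer right-hand side $w$, hence an integer optimal solution, which extended by zeros is an integer optimal solution of the full dual. This proves TDIness of the system with any integer box adjoined, and then the Edmonds--Giles theorem (TDI plus integer right-hand sides implies an integer polyhedron) gives box-integrality. With these two adjustments your argument is complete and coincides with the proof in the source the paper cites.
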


Theorem \ref{sub super} implies that the extended polymatroid is contained in $\pset$ and hence has the ICP.

\begin{theorem}\label{car polymatroid}		
Let $E$ be a finite set and let $f:2^E \to \Z$ be a submodular function, then $EP_f, P_f, B_f\in \pset$. In particular, each of these polyhedra and their projections have the ICP.
\end{theorem}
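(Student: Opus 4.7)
The plan is to verify the three containments $EP_f,P_f,B_f\in\pset$ one after another, with the work concentrated on $EP_f$, and then to invoke Theorem~\ref{basic theorem} and Theorem~\ref{projection} to conclude the ICP statement.

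First I would handle $EP_f$ directly from the definition of $\pset$. Fix $k\in\Zp$, $r\in\{0,\ldots,k\}$ and $w\in\Z^E$. Writing out the two dilates,
\begin{equation}
r\,EP_f=\{x\in\R^E\mid x(U)\leq rf(U)\text{ for all }U\subseteq E\}
\end{equation}
and
\begin{equation}
w-(k-r)\,EP_f=\{x\in\R^E\mid x(U)\geq w(U)-(k-r)f(U)\text{ for all }U\subseteq E\},
\end{equation}
the intersection is exactly
\begin{equation}
\{x\in\R^E\mid g(U)\leq x(U)\leq f'(U)\text{ for all }U\subseteq E\},
\end{equation}
where $f'(U):=rf(U)$ and $g(U):=w(U)-(k-r)f(U)$. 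Since $r\geq 0$ and $f$ is submodular, $f'$ is submodular; since $k-r\geq 0$, $-g=(k-r)f-w$ is submodular (the modular function $w$ is both sub- and supermodular), so $g$ is supermodular. Theorem~\ref{sub super} then gives box-integrality of the intersection, verifying condition \eqref{eq:condition} for $EP_f$.

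Next I would dispatch $P_f$ and $B_f$ using Lemma~\ref{properties of P}. Since $x(U)\geq 0$ for every $U\subseteq E$ is equivalent to $x(e)\geq 0$ for every $e\in E$, we have $P_f=EP_f\cap\{x\in\R^E\mid x\geq 0\}$, i.e.\ the intersection of $EP_f$ with a box, so $P_f\in\pset$. For $B_f$, note that $x(E)\leq f(E)$ is one of the defining inequalities of $EP_f$, and $B_f$ is exactly the face where this inequality holds with equality (equivalently, the set of maximizers of the linear functional $\sum_{e\in E}x(e)$ over $EP_f$). Hence $B_f$ is a face of $EP_f$ and therefore lies in $\pset$. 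Finally, Theorem~\ref{basic theorem} implies that $EP_f$, $P_f$ and $B_f$ all have the ICP, and Theorem~\ref{projection} extends this to their projections.

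The only step that really requires attention is the first one, and even there the main (small) obstacle is just to recognize $rf$ as submodular and $w(U)-(k-r)f(U)$ as supermodular so that Theorem~\ref{sub super} applies; everything after that is a one-line appeal to the closure properties of $\pset$ already established in Section~2.
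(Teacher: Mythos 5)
Your proposal is correct and matches the paper's own proof essentially step for step: both reduce $P_f$ and $B_f$ to $EP_f$ via Lemma~\ref{properties of P} (intersection with a box and face of $EP_f$, respectively), and both verify condition~\eqref{eq:condition} for $EP_f$ by recognizing $rEP_f\cap(w-(k-r)EP_f)$ as a system with submodular upper bounds $rf(U)$ and supermodular lower bounds $w(U)-(k-r)f(U)$, so that Theorem~\ref{sub super} applies, with the ICP conclusion then following from Theorems~\ref{basic theorem} and~\ref{projection}. As a minor remark, your inequality $x(U)\geq w(U)-(k-r)f(U)$ characterizing membership in $w-(k-r)EP_f$ has the correct direction, whereas the paper's proof writes this with $\leq$, an evident sign typo.
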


\begin{proof}
By Theorem \ref{projection}, it suffices to prove the first part of the theorem. Since $B_f$ is a face of $EP_f$ and $P_f$ is the intersection of $EP_f$ with a box, it suffices by Lemma \ref{properties of P} to prove that $EP_f\in \pset$.

Let $k\in \Zp$, $r\in\{0,\ldots,r\}$ and $w\in \Z^E$. 
First note that $rEP_f=EP_{rf},$ with $rf$ submodular again.
Secondly, let $g:=-(k-r)f+w$ and note that $g$ is supermodular. Observe that for $x\in\R^E$ we have $x\in w-(k-r)EP_f$ if and only if $x(U)\leq g(U)$ for all $U\subseteq E$. Hence $rEP_f\cap (w-(k-r)EP_f)$ is box-integer by Theorem \ref{sub super}.
So indeed, $EP_f\in \pset.$
the ICP.
\end{proof}

Note that Theorem \ref{car polymatroid} implies that generalized polymatroid base polytopes also have the ICP, as they are projections of base polytopes of polymatroids. See \cite{Fujishige} for more details on generalized polymatroids.

Below we show that if $P$ is the intersection of two base polytopes of gammoids, then $P$ has the ICP.

Given a digraph $D=(V,A)$ and subsets $U,S$ of $V$, one can define a matroid on the set $S$ as follows. A subset $I\subseteq S$ is independent if there exists $I'\subseteq U$ with $|I|=|I'|$ and if there are $|I|$ vertex-disjoint (directed) paths from $I'$ to $I$. 
A matroid isomorphic to a matroid defined in this way is called a \emph{gammoid}.
Equivalently, gammoids are restrictions of duals of transversal matroids. 
See \cite{Schrijver} for more details on gammoids.

We have the following theorem.
\begin{theorem}		\label{gammoids icp}
Let $P_1$ and $P_2$ be the base polytopes of two gammoids $M_1$ and $M_2$ of rank $k$ defined on the same ground set $S$. Let $P:=P_1\cap P_2$, then $P$ has the ICP.
\end{theorem}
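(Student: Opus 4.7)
The plan is to realize $P=P_1\cap P_2$ as the image under coordinate projection of a polyhedron defined by a totally unimodular matrix, so that Theorems~\ref{tu car} and~\ref{projection} immediately give the ICP.

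To build that polyhedron, I would first fix, for $i=1,2$, a digraph $D_i=(V_i,A_i)$ and sets $U_i,S\subseteq V_i$ realizing the gammoid $M_i$ of rank $k$ on $S$, so that independent sets of $M_i$ are exactly the $S$-endpoints of systems of vertex-disjoint paths in $D_i$ that start in $U_i$. I would form the auxiliary digraph $\tilde D$ by taking the disjoint union of $D_1$ and the arc-reversal of $D_2$ and identifying their copies of $S$. Then I would apply the standard vertex-splitting trick: every vertex $v$ becomes a pair $v^-,v^+$ joined by an arc $(v^-,v^+)$, and every original arc $(u,v)$ becomes $(u^+,v^-)$. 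Finally I would adjoin a super-source $\sigma$ with arcs $(\sigma,u^-)$ for $u\in U_1$ and a super-sink $\tau$ with arcs $(u^+,\tau)$ for $u\in U_2$. Let $Q$ be the polyhedron of nonnegative flows $y$ in this split network which satisfy flow conservation at every vertex other than $\sigma$ and $\tau$, have total flow value $k$ from $\sigma$ to $\tau$, and respect the unit upper bound $y_{v^-,v^+}\leq 1$ on every splitting arc. As a bounded standard flow polytope with integer data, $Q$ is cut out by a totally unimodular matrix, so $Q\in\pset$ by Theorem~\ref{tu car}.

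Next, I would define $\pi\colon Q\to\R^S$ by $y\mapsto(y_{s^-,s^+})_{s\in S}$ and verify that $\pi(Q)=P$. For $\pi(Q)\subseteq P$: a flow $y\in Q$ projects to $x\in[0,1]^S$ with $x(S)=k$ by flow conservation, and for any $T\subseteq S$ the flow from $\sigma$ into $T$ is bounded above by the minimum vertex cut separating $U_1$ from $T$ in $D_1$, which by Menger's theorem equals $r_1(T)$; the symmetric bound on the $D_2$-side gives $x(T)\leq r_2(T)$, so $x\in P_1\cap P_2$. For the reverse inclusion, given $x\in P$ I would use LP duality (equivalently, feasibility of fractional transshipments) separately in each half of $\tilde D$ to produce fractional flows of value $k$ whose outputs at the $S$-vertices equal $x$, and then concatenate them into a single $y\in Q$ with $\pi(y)=x$.

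Having established $\pi(Q)=P$ and $Q\in\pset$, Theorem~\ref{projection} delivers the ICP for $P$. The main obstacle is the reverse inclusion above: one has to translate the polymatroid-type inequalities $x(T)\leq r_i(T)$ cutting out $P_i$ into the actual existence of a fractional flow in the $D_i$-half of the split network. This reduces, via LP duality, to the equality between minimum $U_i$-$T$ vertex cuts and rank values, but it requires careful bookkeeping of the vertex splitting, especially at vertices lying in $U_i\cap S$.
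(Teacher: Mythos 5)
Your high-level plan is exactly the paper's: glue $D_1$ to the reversal of $D_2$, vertex-split, take the totally unimodular flow polytope $Q$ of value-$k$ flows, and invoke Theorems~\ref{tu car} and~\ref{projection}. However, there is a genuine error in how you encode the ground set, and your claimed equality $\pi(Q)=P$ is false. You identify the two copies of $S$ \emph{before} splitting and read off $x(s)$ as the flow on the splitting arc $(s^-,s^+)$. Nothing in the definition of a gammoid prevents $D_1$ from having arcs leaving $S$-vertices, so a single unit of flow may traverse several splitting arcs of $S$: it can enter $s_1$, leave $s_1^+$ along an arc of $D_1$, reach $s_2$, and only then cross into the $D_2$-half. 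Concretely, let $D_1$ have arcs $u_1\to s_1\to s_2$ with $U_1=\{u_1\}$, and let $D_2$ have arcs $u_2\to s_1$ and $u_2\to s_2$ with $U_2=\{u_2\}$; both gammoids are the rank-one uniform matroid on $S=\{s_1,s_2\}$, so $k=1$ and $P$ is the segment between $(1,0)$ and $(0,1)$. In your network the path $\sigma,u_1^-,u_1^+,s_1^-,s_1^+,s_2^-,s_2^+,u_2^-,u_2^+,\tau$ is a feasible flow of value $1$, and it projects to $(1,1)\notin P$ since $r_1(\{s_1,s_2\})=1$. Both of your sub-claims fail on this example: flow conservation only gives that the \emph{net} flow across the cut between the two halves equals $k$, and flow returning from $s^+$ into the $D_1$-half inflates $x(S)$ beyond $k$; likewise $x(T)$ measures flow passing \emph{through} $T$, not the number of disjoint paths ending in $T$, so Menger's theorem does not bound it by $r_1(T)$.

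The fix is precisely the point where the paper's construction differs from yours: keep $V_1$ and $V_2$ disjoint, split all vertices, and join the two halves by a \emph{new} bridging arc $(s^{\text{out}},\varphi(s)^{\text{in}})$ for each $s\in S$, where $\varphi$ matches the copy of $s$ in $V_1$ with its copy in $V_2$; the projection coordinates are these bridging arcs, not splitting arcs. Since every bridging arc points from the $D_1$-half to the $D_2$-half and no arc points back, every $\sigma$--$\tau$ path uses \emph{exactly one} bridging arc; hence an integer flow of value $k$ selects exactly $k$ elements of $S$ and decomposes into vertex-disjoint path systems in $D_1$ and in $D_2$ certifying a common base, and conversely (this is statement \eqref{eq:common} in the paper). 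Alternatively, you could rescue your identification-plus-splitting construction by first normalizing each presentation so that the elements of $S$ are sinks in both $D_1$ and $D_2$ (attach a pendant vertex $\tilde s$ with arc $(s,\tilde s)$ and replace $s$ by $\tilde s$); then your splitting arcs behave exactly like the paper's bridging arcs. Finally, with the correct construction you do not need the LP-duality argument you flagged as the main obstacle: $Q$ is an integer polytope by total unimodularity, and $P_1\cap P_2$ is an integer polytope whose vertices are the common bases by Edmonds' matroid intersection theorem, so $\pi(Q)=P$ follows simply by matching integer points on both sides.
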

\begin{proof} 
For $i=1,2$, let $M_i$ be associated to digraph $D_i=(V_i,A_i)$ induced by sets $U_i,S_i$. We may assume that $V_1$ and $V_2$ are disjoint. We may further assume that $S=S_1$ and denote by $\varphi:S_1 \to S_2$ the bijection corresponding to the identifiction of $S_2$ and $S$.

We define a new digraph by glueing $D_1$ to the reverse of $D_2$ using the bijection $\varphi$ and splitting each node $v$ into a source node $v^{\text{out}}$ and a sink node $v^{\text{in}}$. More precisely, define the digraph $D=(V,A)$ as follows.
\begin{eqnarray}
V&:=&\{v^{\text{in}},v^{\text{out}}\mid v\in V_1\cup V_2\},\nonumber\\
A&:=&\{(v^{\text{in}},v^{\text{out}})\mid v\in V_1\cup V_2\}\cup\ \{(s^{\text{out}},\varphi(s)^{\text{in}})\mid s\in S\}\nonumber\\
&&\cup\ \{(u^{\text{out}},v^{\text{in}})\mid (u,v)\in A_1\}\cup\{(u^{\text{out}},v^{\text{in}})\mid (v,u)\in A_2\}. 
\end{eqnarray}
Identifying each element $s\in S$ with the corresponding arc $(s^{\text{out}},\varphi(s)^{\text{in}})$, we have 
\begin{equation}
\begin{array}{l}
I\subset S \text{ is a common base of } M_1 \text{ and } M_2 \text{ if and only if there} 
 \\
 \text{exists }k \text{ arc disjoint paths from }U_1^{\text{in}} \text{ to }U_2^{\text{out}} \text{ in }D \text{ passing through } I. \label{eq:common}
 \end{array}
\end{equation}
Extend $D$ with two extra vertices $r$ (source) and $s$ (sink), and arcs $(r,u^{\text{in}})$ for each $u\in U_1$, arcs $(u^{\text{out}},s)$ for each $u\in U_2$ and finally the arc $(s,r)$.
Let $X$ be the incidence matrix of the resulting digraph $D'=(V',A')$.
Define the flow polytope 
\begin{equation}
Q:=\{f\in \R^{A'}\mid Xf=0, 0\leq f(a)\leq 1, \forall a\in A'\setminus\{(s,r)\}, f((s,r))=k\}.	\label{eq:flow}
\end{equation}

Since $X$ is totally unimodular, $Q$ is integer and $P_1\cap P_2$ is the projection of $Q$ onto the coordinates indexed by $S$. Furthermore, Theorem \ref{tu car} implies that $Q$ has the ICP.
\end{proof}

We end this section with some (open) questions concerning possible extensions of Theorem \ref{gammoids icp}.

Gammoids form a subclass of so-called strongly base orderable matroids. It is known that for any two strongly base orderable matroids, the common base polytope has the integer decomposition property (see \cite{Schrijver}). 
\begin{question}
Does the intersection of two base polytopes of strongly base orderable matroids have the ICP?
\end{question}
In \cite{Sebo} Seb\H o asks whether the \Car \! \! rank of the $r$-arborescence polytope can be bounded by the cardinality of the groundset.
An $r$-arborescence is a common base of a partition matroid and a graphic matroid. 
A partition matroid is a gammoid.
\begin{question}
Does the $r$-arborescence polytope have the ICP?
\end{question}


\end{document}